\documentclass[12pt,reqno]{amsart}
\usepackage{amsmath,amssymb,amsfonts,amsthm}
\usepackage{mathtools}
\usepackage{bm}
\usepackage[margin=1.25in]{geometry}
\usepackage{hyperref}
\usepackage[nameinlink,capitalise]{cleveref}
\usepackage{orcidlink}
\usepackage{hyperref}
\usepackage{tikz}

\usepackage{graphicx}
\usepackage{epstopdf}
\usepackage{amsfonts,amssymb,amsmath}
\usepackage{subfigure} 
\usepackage[utf8]{inputenc}
\usepackage{tikz}
\usepackage{xcolor}
\usepackage{hyperref}
\hypersetup{colorlinks=true,linkcolor=teal,citecolor=teal}
\usetikzlibrary{positioning}

\newtheorem{theorem}{Theorem}[section]
\newtheorem{proposition}[theorem]{Proposition}
\newtheorem{lemma}[theorem]{Lemma}
\newtheorem{corollary}[theorem]{Corollary}

\theoremstyle{definition}

\theoremstyle{remark}

\title[An exact structural identity and its normal form]{An exact structural identity and its normal form consequences in a modified Leslie--Gower model}

\author[Albarran-Garc\'ia, Alvarez-Ram\'irez]{Roberto Albarran-Garc\'ia\orcidlink{0009-0006-4591-7316}, Martha Alvarez-Ram\'irez\orcidlink{0000-0001-9187-1757}}
\address{Departamento de Matem\'aticas, UAM--Iztapalapa,
09310 Iztapalapa, Mexico City, Mexico}
\email{albarrangr74@live.com.mx,mar@xanum.uam.mx}

\author[Garc\'ia-Rivera]{Marco Polo Garc\'ia-Rivera\orcidlink{0009-0005-4774-3383}}
\address{Facultad de Ingenier\'ia, Universidad La Salle,
Mexico City, Mexico}
\email{mpgr86@gmail.com}

\begin{document}

\maketitle

\begin{abstract}
Zhao and Zhao (2026) established a codimension-four
Bogdanov--Takens singularity in a modified Leslie--Gower
predator--prey model through a recursive normal form computation. We
show that the nonlinear coefficients governing the restriction of
the reduced system to the singularity's distinguished
eigendirection are not independent. Instead, a single exact rational
identity, already present in the transformed equations,
generates the entire family of pure Taylor coefficients and proves
that their simultaneous vanishing is an exact all orders property
rather than a finite-order coincidence.
We further show that this structural identity propagates through the
planar Bogdanov--Takens normal form algorithm of Kuznetsov (2005).
Although the pure coefficients vanish identically, the corresponding
normal form coefficients are generated by mixed quadratic
interactions. This leads to an explicit characterization of the degenerate
Bogdanov--Takens locus and reduces the search for higher-order
Bogdanov--Takens degeneracies to a single explicit algebraic condition.
\end{abstract}

\section{Introduction} \label{sec:introduction}
Bogdanov--Takens (BT) singularities are organizing centers for
saddle--node, Hopf, and homoclinic bifurcations
\cite{Kuznetsov2004, GuckenheimerHolmes1983}. At a critical equilibrium, the linearization
has a nonsemisimple double-zero eigenvalue, and the local dynamics is
described by a planar nilpotent normal form
\cite{Bogdanov1981,Takens1974}. In suitable
coordinates, this normal form can be written as
\begin{equation}\label{eq:general-BT-normal form}
\dot w_0=w_1,
\qquad
\dot w_1=
\sum_{k\ge2}
\left(
a_k w_0^k
+
b_k w_0^{k-1}w_1
\right).
\end{equation}
The monomials depending only on \(w_0\) are referred to as
\emph{pure}, whereas those involving both variables are called
\emph{mixed}. Accordingly, \(a_k\) and \(b_k\) denote the pure and
mixed coefficients of the normal form, respectively. Successive
Bogdanov--Takens degeneracies are determined by the vanishing of
specific normal form coefficients. Practical recursive formulas for
computing these coefficients were developed by Kuznetsov
\cite{Kuznetsov2005} through the Fredholm solvability conditions
associated with the homological equations.

Predator--prey models constitute an important source of such
degenerate equilibria. In particular, modified Leslie--Gower models
incorporating Allee effects, nonlinear functional responses, or
alternative food resources exhibit rich bifurcation structures,
including Bogdanov--Takens singularities of codimension higher than
two. Recently, Zhaoand Zhao \cite{ZhaoZhao2026} studied a modified
Leslie--Gower predator--prey model with an additive Allee effect and
a Beverton--Holt type reproduction function. Using a recursive
normal form computation, they identified Bogdanov--Takens
singularities of codimensions two, three, and four together with the
corresponding local bifurcation scenarios.

The recursive nature of the normal form computation raises a natural
structural question: are the higher-order coefficients genuinely
independent, or do they reflect an algebraic structure already present
before the successive near-identity transformations of the Takens
normalization are performed? The present paper addresses this question
for the modified Leslie--Gower model considered by Zhao and Zhao
\cite{ZhaoZhao2026}.

Rather than reproducing their recursive normal form computation, we
first investigate the transformed equations obtained after the nilpotent reduction. Our main result is an exact rational
identity for the restriction of the transformed second component to
the distinguished eigendirection of the nilpotent linearization. This
identity yields a closed formula for the complete family of pure
Taylor coefficients and shows that all higher-order pure terms are
generated by a single structural factor. Consequently, their
simultaneous cancellation is an exact all orders property of the
nilpotent reduction rather than an accidental feature of the first
few terms of the Taylor expansion. In particular, the identity
provides the structural mechanism underlying the cancellations
observed in the recursive computation.

The structural identity is not confined to the linearly reduced
system. We subsequently incorporate it into the planar
Bogdanov--Takens normal form procedure of Kuznetsov
\cite{Kuznetsov2005}. Although the pure Taylor coefficients may
vanish simultaneously, the corresponding normal form coefficients
need not vanish, since mixed nonlinear interactions contribute during
the near identity transformations. This yields explicit expressions
for the first normal form coefficients, completely characterizes the
distinguished case in which all higher-order pure coefficients
disappear, and reduces the search for higher-order degeneracies to a
single explicit algebraic condition.
The overall strategy of the paper is summarized in Figure~\ref{fig:strategy}.
This approach separates the structural information already contained
in the nilpotent reduction from the subsequent normal form computation.
The first part of the paper establishes the exact identity and its
all orders consequences, whereas the second part shows how this
information simplifies the computation of the Bogdanov--Takens
normal form coefficients.
\begin{figure}[hbt]
\centering
\begin{tikzpicture}[
>=stealth,
node distance=1.55cm,
every node/.style={font=\small},
box/.style={
draw,
rounded corners,
minimum width=3.6cm,
minimum height=0.9cm,
align=center},
arrow/.style={->,thick}
]

\node[box] (A1)
{Modified Leslie--Gower model};

\node[box,below of=A1] (A2)
{Linear nilpotent reduction};

\node[box,below of=A2] (A3)
{Recursive Takens\\ normal form computation};

\node[box,below of=A3] (A4)
{Pure and mixed coefficients\\ obtained order by order};

\draw[arrow] (A1)--(A2);
\draw[arrow] (A2)--(A3);
\draw[arrow] (A3)--(A4);

\node[box,right=4.6cm of A2] (B1)
{Exact rational identity};

\node[box,below of=B1] (B2)
{Restriction to the\\ distinguished eigendirection};

\node[box,below of=B2] (B3)
{Closed Taylor expansion};

\node[box,below of=B3] (B4)
{Pure coefficients\\ obtained at all orders};

\node[box,below of=B4] (B5)
{Kuznetsov normal form\\ coefficients};

\node[box,below of=B5] (B6)
{Degenerate Bogdanov--Takens\\ locus and fourth-order condition};

\draw[arrow] (A2)--(B1);
\draw[arrow] (B1)--(B2);
\draw[arrow] (B2)--(B3);
\draw[arrow] (B3)--(B4);
\draw[arrow] (B4)--(B5);
\draw[arrow] (B5)--(B6);

\node[above=0.3cm of A1]
{\textbf{Recursive approach}};

\node[above=0.3cm of B1]
{\textbf{Present approach}};
\end{tikzpicture}

\caption{Comparison between the recursive normal form computation and
the approach developed in this paper. The recursive procedure
determines the normal form coefficients order by order. In contrast,
the present approach first derives an exact rational identity for the
restriction of the system~\eqref{eq:nilpotent-system} to
the distinguished eigendirection, then generates the complete family
of pure Taylor coefficients through a single Taylor expansion, and
finally incorporates this structural information into the Kuznetsov
normal form computation.}\label{fig:strategy}
\end{figure}

The paper is organized as follows. Section~\ref{sec:nilpotent}
derives the explicit nilpotent reduction of the modified
Leslie--Gower model. Section~\ref{sec:identity} establishes the exact
structural identity and obtains a closed formula for the complete
family of pure Taylor coefficients. Section~\ref{sec:normal form}
discusses the relation between this structural identity and the
classical Takens normal form approach, clarifying the role of the
pure coefficients in the recursive normalization. Finally,
Section~\ref{sec:kuznetsov} derives the principal consequences of the
structural identity for the smooth Bogdanov--Takens normal form of
Kuznetsov, including explicit expressions for the first normal form
coefficients, the characterization of the degenerate Bogdanov--Takens locus, and the
explicit algebraic condition governing higher-order degeneracies. The
coefficient polynomials defining the latter condition are collected
in Appendix~\ref{app:P4}.

\section{Nilpotent reduction}\label{sec:nilpotent}
We begin with the dimensionless predator--prey model introduced by Zhao and Zhao~\cite{ZhaoZhao2026}, which serves as the starting point
for our analysis:
\begin{equation}\label{eq:model}
\begin{aligned}
\dot x &= x(1-x)-\frac{\alpha x}{x+\beta}-rxy,\\
\dot y &= \frac{\delta y}{1+\eta y}\left(1-\frac{y}{x+c}\right).
\end{aligned}
\end{equation}
Here \(x\) and \(y\) denote the prey and predator densities,
respectively. The parameter \(\alpha\) measures the strength of the
additive Allee effect, \(\beta\) is the corresponding threshold,
\(r\) is the predation coefficient, \(\delta\) is the intrinsic
predator growth rate, \(\eta\) characterizes the Beverton-Holt
density dependence, and \(c\) represents the availability of
alternative food resources. Unless otherwise stated, all parameters are assumed to be positive.
We shall also consider the boundary case $\beta=0$, which plays an
important role in the structural analysis below.

We restrict our attention to positive equilibria of \eqref{eq:model}, since these
are the equilibria considered by Zhao and Zhao \cite{ZhaoZhao2026} in their
analysis of the codimension-four Bogdanov--Takens degeneracy.
Our aim is not to reproduce their normal form analysis, but to
identify an exact structural property of the associated nilpotent
reduction.

A positive equilibrium has the form
\[
E=(x_0,x_0+c),
\]
and the equilibrium equation is
\begin{equation}\label{eq:equilibrium-polynomial}
(r+1)x_0^2+(\beta+r\beta+rc-1)x_0+\beta rc-\beta+\alpha=0.
\end{equation}
Solving \eqref{eq:equilibrium-polynomial} for $\alpha$ gives
\begin{equation}\label{eq:alpha-equilibrium}
\alpha=-(r+1)x_0^2-(\beta+r\beta+rc-1)x_0-\beta rc+\beta.
\end{equation}

At $E$, the Jacobian matrix is
\begin{equation}\label{eq:J-equilibrium}
J(E)=
\begin{pmatrix}
-\dfrac{x_0(\beta+cr+rx_0+2x_0-1)}{\beta+x_0} & -rx_0\\[1.1ex]
\dfrac{\delta}{1+\eta(x_0+c)} & -\dfrac{\delta}{1+\eta(x_0+c)}
\end{pmatrix}.
\end{equation}
Imposing
$\operatorname{tr}J(E)=0$, and  $\det J(E)=0$,
yields
\begin{equation}\label{eq:BT-parametrization}
\begin{aligned}
\beta&=\frac{1-cr-2(r+1)x_0}{r+1},\qquad
\delta=rx_0\bigl(1+\eta(x_0+c)\bigr),\\
\alpha&=\frac{\bigl(cr+(r+1)x_0-1\bigr)^2}{r+1}.
\end{aligned}
\end{equation}
Hence every Bogdanov--Takens equilibrium is parametrized by the free
variables \(r,c,\eta,x_0\), whereas the remaining parameters
\(\alpha,\beta,\delta\) are determined by \eqref{eq:BT-parametrization}. Throughout the
paper we work on this Bogdanov--Takens parameter manifold.

Once the relations \eqref{eq:BT-parametrization} hold,
\begin{equation}\label{eq:nilpotent-J}
J(E)=rx_0
\begin{pmatrix}
1& &-1\\
1& &-1
\end{pmatrix},
\qquad
J(E)^2=0,
\end{equation}
so the linearization is a nonzero nilpotent matrix of index two.
This nilpotent structure is the starting point of our analysis.
Accordingly, we transform system~\eqref{eq:model} into the canonical
Jordan form associated with \(J(E)\). We first translate the
equilibrium to the origin by introducing
\[
X=x-x_0,
\qquad
Y=y-(x_0+c).
\]
In the translated coordinates, the Jacobian matrix has a double-zero
eigenvalue with a single Jordan block. Let
\[
q_0=
\begin{pmatrix}
1\\
1
\end{pmatrix},
\qquad
q_1=
\begin{pmatrix}
\dfrac{1}{rx_0}\\[0.4em]
0
\end{pmatrix}
\]
be vectors satisfying
\[
J(E)q_0=0,
\qquad
J(E)q_1=q_0.
\]
Thus, \((q_0,q_1)\) forms a Jordan chain for the nilpotent matrix
\(J(E)\). Introducing the matrix
\[
P=(q_0\ q_1)
=
\begin{pmatrix}
1& & \dfrac{1}{rx_0}\\[0.5em]
1& & 0
\end{pmatrix},
\]
one obtains
\[
P^{-1}J(E)P=
\begin{pmatrix}
0&1\\
0&0
\end{pmatrix},
\]
which is the canonical nilpotent Jordan form. 
This canonical form naturally motivates introducing coordinates
\((u,v)\) adapted to the Jordan chain \((q_0,q_1)\). Specifically,
the translated variables \((X,Y)\) are expressed in this basis as
\[
\begin{pmatrix}
X\\
Y
\end{pmatrix}
=
P
\begin{pmatrix}
u\\
v
\end{pmatrix},
\]
that is,
\begin{equation}\label{eq:Jordan-change}
X=u+\frac{v}{rx_0},
\qquad
Y=u,
\end{equation}
or, equivalently,
\[
u=Y,
\qquad
v=rx_0(X-Y).
\]
Recalling that $X=x-x_0$ and $Y=y-(x_0+c)$, the original variables
are recovered from
\begin{equation}\label{eq:original-variables}
x=x_0+u+\frac{v}{rx_0},
\qquad
y=x_0+c+u.
\end{equation}
Then, the transformed system \eqref{eq:model} takes the form
\begin{equation}\label{eq:nilpotent-system}
\dot u=v+F(u,v),
\qquad
\dot v=G(u,v),
\end{equation}
where
$F(u,v),\,G(u,v)=\mathcal{O}\!\left(\|(u,v)\|^2\right).$
The functions \(F\) and \(G\) collect the nonlinear terms remaining
after the linear nilpotent reduction.
Therefore, the nonlinear dynamics near the equilibrium is completely
encoded in the functions \(F\) and \(G\). The next section shows that
the restriction of \(G\) to the eigendirection generated by \(q_0\)
possesses a remarkable exact structure. Rather than being determined
independently order by order, all pure Taylor coefficients arise from
a single rational identity. This identity constitutes the central result of the first part of the
paper and provides the structural mechanism underlying the successive
cancellations observed in the recursive normal form computation.
For later use in Section~\ref{sec:identity}, we introduce
the two quantities
\begin{equation}\label{eq:DeltaTheta}
\Delta=cr+(r+1)x_0-1,
\qquad
\Theta=cr+2(r+1)x_0-1=\Delta+(r+1)x_0.
\end{equation}
Combining these definitions with the parametrization
\eqref{eq:BT-parametrization} gives
\begin{equation}\label{eq:Theta-beta}
\Theta=-(r+1)\beta.
\end{equation}
In particular, $\Theta$ vanishes exactly when $\beta=0$.
The next section shows that $\Theta$ is the common structural factor
appearing in every higher-order pure coefficient.

\section{Exact structural identity} \label{sec:identity}
The objective of this section is to derive an exact rational expression
for the restriction of the transformed system \eqref{eq:nilpotent-system} to the
distinguished eigendirection generated by \(q_0\). Unlike the
recursive Takens normal form procedure, the derivation is exact and
does not rely on a finite-order Taylor expansion. 
As a consequence, the complete family of pure Taylor coefficients is
determined simultaneously from the system~\eqref{eq:nilpotent-system}.

Let
\begin{equation*}
H(x):=f(x,x+c),
\end{equation*}
where $f$ is the first component of \eqref{eq:model}. Then
\begin{equation}\label{eq:F-explicit}
H(x)
=
x\bigl(1-cr-(1+r)x\bigr)
-\frac{\alpha x}{x+\beta}.
\end{equation}
Using \eqref{eq:BT-parametrization} and
\eqref{eq:Theta-beta}, we obtain
\begin{equation}\label{eq:BT-identities}
\alpha=\frac{\Delta^{2}}{1+r},
\qquad \beta=-\frac{\Theta}{1+r},
\qquad x_0+\beta=-\frac{\Delta}{1+r}.
\end{equation}

\begin{lemma}\label{lem:doublezero}
The function \(H\) satisfies
\[
H(x_0)=0,
\qquad
H'(x_0)=0.
\]
Consequently, \(x_0\) is a zero of multiplicity at least two of
\(H\).
\end{lemma}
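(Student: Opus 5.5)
The plan is a direct evaluation of $H$ and $H'$ at $x_0$, using the Bogdanov--Takens parametrization in the rewritten form \eqref{eq:BT-identities}. Both quantities should reduce to a difference of two equal terms: one governed by $\Delta$, the other by $\Theta$.

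\textbf{Value at $x_0$.} From \eqref{eq:DeltaTheta}, $1-cr-(1+r)x_0=-\Delta$, so the polynomial part of \eqref{eq:F-explicit} at $x_0$ equals $-\Delta x_0$. For the rational part we substitute $\alpha=\Delta^2/(1+r)$ and $x_0+\beta=-\Delta/(1+r)$ from \eqref{eq:BT-identities}. This gives
\[
\frac{\alpha x_0}{x_0+\beta}=\frac{\Delta^2}{1+r}\cdot\Bigl(-\frac{1+r}{\Delta}\Bigr)x_0=-\Delta x_0 ,
\]
and therefore $H(x_0)=-\Delta x_0+\Delta x_0=0$. This step divides by $\Delta$, which is legitimate because $\Delta=-(1+r)(x_0+\beta)\neq 0$ whenever the vector field is defined at $E$, since $x_0+\beta\ne0$. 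Alternatively, $H(x_0)=0$ is just the equilibrium relation \eqref{eq:equilibrium-polynomial} multiplied by $x_0/(x_0+\beta)$.

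\textbf{Derivative at $x_0$.} Differentiating \eqref{eq:F-explicit} gives
\[
H'(x)=1-cr-2(1+r)x-\frac{\alpha\beta}{(x+\beta)^2}.
\]
At $x_0$ the polynomial part is $-\Theta$, again by \eqref{eq:DeltaTheta}. Using $\alpha=\Delta^2/(1+r)$, $\beta=-\Theta/(1+r)$ and $(x_0+\beta)^2=\Delta^2/(1+r)^2$, the rational term becomes
\[
\frac{\alpha\beta}{(x_0+\beta)^2}=\frac{\Delta^2}{1+r}\cdot\Bigl(-\frac{\Theta}{1+r}\Bigr)\cdot\frac{(1+r)^2}{\Delta^2}=-\Theta .
\]
Hence $H'(x_0)=-\Theta+\Theta=0$.

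\textbf{Multiplicity.} Since $H$ is smooth (indeed rational) near $x_0$, the vanishing of $H(x_0)$ and $H'(x_0)$ means that $x_0$ is a zero of multiplicity at least two.

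There is no real obstacle here. The only points needing care are:
\begin{itemize}
\item the nondegeneracy $\Delta\neq0$, which is needed to cancel $\Delta$ and $\Delta^2$;
\item keeping track of the signs in $\beta=-\Theta/(1+r)$ and $x_0+\beta=-\Delta/(1+r)$.
\end{itemize}
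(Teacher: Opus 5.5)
Your proposal is correct and follows essentially the paper's argument. The paper obtains $H(x_0)=0$ directly from the equilibrium property; this is your alternative remark, where strictly $H(x_0)=-\frac{x_0}{x_0+\beta}$ times the left side of \eqref{eq:equilibrium-polynomial}, a harmless sign. For $H'(x_0)$ the paper shows $\alpha\beta/(x_0+\beta)^2=(1+r)\beta=1-cr-2(1+r)x_0$, which is your $-\Theta$ computation in different notation, and your explicit note that $\Delta\neq0$ follows from $x_0+\beta\neq0$ makes precise a point the paper leaves implicit.
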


\begin{proof}
Since $E=(x_0,x_0+c)$ is an equilibrium, $H(x_0)=f(x_0,x_0+c)=0.$
Differentiating \eqref{eq:F-explicit} yields
\[
H'(x)
=
1-cr-2(1+r)x-\frac{\alpha\beta}{(x+\beta)^2}.
\]

Using \eqref{eq:BT-identities},
\[
\frac{\alpha\beta}{(x_0+\beta)^2}
=
\frac{\dfrac{\Delta^2}{1+r}\,\beta}
{\dfrac{\Delta^2}{(1+r)^2}}=(1+r)\beta.\]

Since
\[
(1+r)\beta
=
1-cr-2(1+r)x_0,
\]
we obtain
\[H'(x_0)=1-cr-2(1+r)x_0-(1+r)\beta=0.\]
\end{proof}
Lemma~\ref{lem:doublezero} shows that the quadratic factor appearing
later in the restriction of the transformed system \eqref{eq:nilpotent-system} is not an
artifact of a Taylor expansion. It is a direct consequence of the
Bogdanov--Takens conditions, which force \(x_0\) to be a double zero
of the scalar function \(H\).

The previous observation suggests looking for an exact factorization
of \(H\), rather than expanding it order by order.
\begin{proposition}\label{prop:F-factorization}
For \(x=x_0+u\), the function \(H\) admits the exact factorization
\begin{equation}\label{eq:F-factorization}
H(x_0+u)
=
\frac{(1+r)^2u^2(x_0+u)}
{\Delta-(1+r)u}.
\end{equation}
\end{proposition}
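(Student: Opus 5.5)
We verify the factorization by direct algebra, putting $H$ over a common denominator and using the Bogdanov--Takens identities \eqref{eq:BT-identities} to rewrite every parameter in terms of $\Delta$, $r$, and $x_0$.

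\textbf{Step 1: the denominator.} By \eqref{eq:BT-identities}, $x+\beta = x_0+u+\beta = u-\frac{\Delta}{1+r} = -\frac{\Delta-(1+r)u}{1+r}$. Hence
\[
\frac{\alpha x}{x+\beta} = -\frac{\Delta^2 (x_0+u)}{\Delta-(1+r)u}.
\]
This already produces the denominator $\Delta-(1+r)u$ appearing in \eqref{eq:F-factorization}.

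\textbf{Step 2: the linear factor.} Next we rewrite the factor $1-cr-(1+r)x$. From \eqref{eq:DeltaTheta}, $1-cr = (1+r)x_0-\Delta$, so
\[
1-cr-(1+r)(x_0+u) = -\Delta-(1+r)u.
\]

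\textbf{Step 3: combine and simplify.} Substituting both expressions into \eqref{eq:F-explicit} and factoring out $(x_0+u)$ gives
\[
H(x_0+u) = (x_0+u)\left[-\bigl(\Delta+(1+r)u\bigr) + \frac{\Delta^2}{\Delta-(1+r)u}\right].
\]
Over the common denominator, the numerator of the bracket is
\[
-\bigl(\Delta+(1+r)u\bigr)\bigl(\Delta-(1+r)u\bigr) + \Delta^2 = (1+r)^2u^2,
\]
which is a difference of squares. This yields \eqref{eq:F-factorization} exactly.

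The identity holds as rational functions wherever $\Delta-(1+r)u\neq 0$, that is, wherever $x+\beta\ne0$. The only point requiring care is the use of $\Delta\neq0$ implicitly in Step 1, which is equivalent to $x_0+\beta\neq0$. This is needed anyway for $H$ to be defined at $x_0$, so it imposes no extra restriction. The computation is also consistent with Lemma~\ref{lem:doublezero}: the factor $u^2$ is exactly the double zero established there, which serves as a check.

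The main obstacle is simply recognizing the substitution $1-cr=(1+r)x_0-\Delta$. Once it is made, the difference-of-squares cancellation is immediate.
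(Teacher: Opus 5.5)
Your proposal is correct and follows the paper's proof essentially line for line. Both arguments substitute the identities \eqref{eq:BT-identities}, rewrite $1-cr=(1+r)x_0-\Delta$ and $x_0+u+\beta=-\frac{\Delta-(1+r)u}{1+r}$, and close with the same difference-of-squares identity $-\bigl(\Delta+(1+r)u\bigr)\bigl(\Delta-(1+r)u\bigr)+\Delta^2=(1+r)^2u^2$.
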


\begin{proof}
Substituting the identities
\eqref{eq:BT-identities} into
\eqref{eq:F-explicit} gives
$$
H(x)=x\left[1-cr-(1+r)x-\frac{\Delta^2}{(1+r)(x+\beta)}\right].$$
Since $1-cr=(1+r)x_0-\Delta,$
setting $x=x_0+u$ yields
$$
1-cr-(1+r)(x_0+u)
=
-\Delta-(1+r)u.$$
Moreover, by
\eqref{eq:BT-identities},
\[
x_0+u+\beta=u-\frac{\Delta}{1+r}=-\frac{\Delta-(1+r)u}{1+r}.
\]
Therefore,
\[
H(x_0+u)=(x_0+u)
\left[
-\Delta-(1+r)u
+
\frac{\Delta^2}
{\Delta-(1+r)u}
\right].
\]
Finally, the elementary identity
\[
-\Delta-(1+r)u
+
\frac{\Delta^2}
{\Delta-(1+r)u}
=
\frac{(1+r)^2u^2}
{\Delta-(1+r)u}
\] gives \eqref{eq:F-factorization}.
\end{proof}

Proposition~\ref{prop:F-factorization} shows that the double-zero
property established in Lemma~\ref{lem:doublezero} admits an exact
rational representation. We now transfer this identity to the
transformed system \eqref{eq:nilpotent-system}. The resulting expression will determine the
entire hierarchy of pure Taylor coefficients simultaneously.
Recall from \eqref{eq:Jordan-change} that
\[
X=u+\frac{v}{rx_0},
\qquad
Y=u,
\]
or equivalently,
\[
x=x_0+u+\frac{v}{rx_0},
\qquad
y=x_0+c+u.
\]

We now use these coordinates to restrict system~\eqref{eq:nilpotent-system}
to the eigendirection generated by \(q_0\), that is, to the line \(v=0\).

\begin{proposition}\label{prop:restriction}
On the double zero parameter set, the restriction of the transformed
system~\eqref{eq:nilpotent-system} to the eigendirection generated by \(q_0\) is
\begin{equation}\label{eq:restriction}
G(u,0) =
\frac{r(1+r)^2x_0u^2(x_0+u)}
{\Delta-(1+r)u}.
\end{equation}
\end{proposition}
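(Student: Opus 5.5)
The plan is to compute the second component of \eqref{eq:nilpotent-system} exactly from the coordinate change \eqref{eq:Jordan-change}, without expanding anything in Taylor series. Write system~\eqref{eq:model} as \(\dot x=f(x,y)\), \(\dot y=g(x,y)\). Since \(u=Y=y-(x_0+c)\) and \(v=rx_0(X-Y)=rx_0\bigl((x-x_0)-(y-x_0-c)\bigr)\), differentiating along trajectories gives the exact relations
\[
\dot u=g(x,y),\qquad \dot v=rx_0\bigl(f(x,y)-g(x,y)\bigr),
\]
where \((x,y)\) is given by \eqref{eq:original-variables}.

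Next I will check that the right-hand side of \(\dot v\) has no linear part, so that it equals \(G(u,v)\) in full. The linear part of \(rx_0(f-g)\) at \(E\) is \(rx_0\) times the difference of the two rows of \(J(E)\). By \eqref{eq:nilpotent-J} these two rows coincide on the Bogdanov--Takens parameter manifold, so the difference vanishes. Hence
\[
G(u,v)=rx_0\bigl(f(x,y)-g(x,y)\bigr)
\]
identically. This is consistent with \(P^{-1}J(E)P\) having zero second row.

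Then I restrict to \(v=0\). By \eqref{eq:original-variables}, setting \(v=0\) gives \(x=x_0+u\) and \(y=x_0+c+u=x+c\). On this line the factor \(1-y/(x+c)\) in the predator equation vanishes identically, so \(g(x,x+c)=0\) for every \(x\). Therefore
\[
G(u,0)=rx_0\,f(x_0+u,x_0+u+c)=rx_0\,H(x_0+u).
\]
Substituting the exact factorization of Proposition~\ref{prop:F-factorization} then yields \eqref{eq:restriction} directly.

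The only step needing care is confirming that \(G\) is exactly \(rx_0(f-g)\), with no leftover linear terms being subtracted off in the definition of \(G\). This follows from the row equality in \eqref{eq:nilpotent-J}, which in turn relies on the parametrization \eqref{eq:BT-parametrization}. That is precisely why the statement is restricted to the double zero parameter set.
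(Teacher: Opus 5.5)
Your proposal is correct and is essentially the paper's proof: restrict to $v=0$, observe that the predator equation vanishes identically on $y=x+c$, deduce $G(u,0)=rx_0H(x_0+u)$ from $v=rx_0(X-Y)$, and apply Proposition~\ref{prop:F-factorization}. Your extra check that $rx_0(f-g)$ has no linear part, because the rows of $J(E)$ coincide, is added care that the paper leaves implicit; note, however, that the double-zero hypothesis is used just as essentially in the factorization of $H$ itself, through \eqref{eq:BT-identities}.
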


\begin{proof}
Restricting the inverse change of
coordinates~\eqref{eq:original-variables} to the line \(v=0\) yields
\[
x=x_0+u,
\qquad
y=x_0+c+u=x+c.
\]
Hence
\[
\frac{\delta y}{1+\eta y}
\left(
1-\frac{y}{x+c}
\right)\equiv0,
\]
so that the second equation of the original system vanishes
identically along this line. Since
\[
v=rx_0(X-Y),
\]
it follows that
\[
\dot v=rx_0(\dot X-\dot Y),
\]
and therefore
\[
G(u,0)=rx_0H(x_0+u).
\]
Substituting the factorization of \(H\) obtained in
Proposition~\ref{prop:F-factorization} immediately gives
\eqref{eq:restriction}.
\end{proof}
Consequently, the Taylor coefficients of \(G(u,0)\) are precisely the
coefficients of the pure powers of \(u\) in the second component of
system~\eqref{eq:nilpotent-system}. Unlike the
recursive Takens normal form procedure, which determines these
coefficients successively through near-identity transformations, the
exact rational identity~\eqref{eq:restriction} determines the entire
family simultaneously through a single Taylor expansion.

\begin{corollary}\label{cor:taylor-coefficients}
Assume that $\Delta \neq 0$. Then the Taylor expansion of $G(u,0)$
at $u=0$ is of the form
\[
G(u,0) = \sum_{k\ge 2} a_k u^k,
\]
where, for a function $\varphi$ analytic at $u=0$, $[u^k]\varphi(u)$
denotes the coefficient of $u^k$ in its Taylor expansion, so that
\[
a_2 = \frac{r(1+r)^2x_0^2}{\Delta},
\]
and, for every $k\ge3$,
\begin{equation}
\label{eq:ak-formula}
a_k = [u^k]G(u,0) = \frac{r(1+r)^{k-1}x_0\Theta}{\Delta^{k-1}}.
\end{equation}
\end{corollary}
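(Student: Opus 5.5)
We start from the exact identity of Proposition~\ref{prop:restriction},
\[
G(u,0)=\frac{r(1+r)^2x_0u^2(x_0+u)}{\Delta-(1+r)u},
\]
and expand it directly. Since $\Delta\neq0$, we write $\Delta-(1+r)u=\Delta\bigl(1-\rho u\bigr)$ with $\rho=(1+r)/\Delta$. This factor is analytic and nonvanishing near $u=0$, so the geometric series gives
\[
\frac{1}{\Delta-(1+r)u}=\frac{1}{\Delta}\sum_{j\ge0}\rho^j u^j,\qquad |u|<\Bigl|\frac{\Delta}{1+r}\Bigr|.
\]
Thus $G(u,0)$ is analytic at $u=0$. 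Because of the explicit factor $u^2$, its expansion starts at order two.

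Next we multiply the series by $u^2(x_0+u)=x_0u^2+u^3$ and read off $[u^k]$. The coefficient $a_2$ comes only from $x_0u^2$ times the $j=0$ term, which gives $r(1+r)^2x_0^2/\Delta$. For $k\ge3$ there are two contributions: $x_0u^2$ times $\rho^{k-2}u^{k-2}$, and $u^3$ times $\rho^{k-3}u^{k-3}$. Collecting them gives
\[
a_k=\frac{r(1+r)^2x_0}{\Delta}\,\rho^{k-3}\bigl(x_0\rho+1\bigr)
=\frac{r(1+r)^{k-1}x_0}{\Delta^{k-1}}\bigl((1+r)x_0+\Delta\bigr).
\]

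Finally, by \eqref{eq:DeltaTheta} we have $(1+r)x_0+\Delta=\Theta$, which yields \eqref{eq:ak-formula}.

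The one step that needs care is the exponent bookkeeping. The prefactor contributes $(1+r)^2/\Delta$ and $\rho^{k-3}$ contributes $(1+r)^{k-3}/\Delta^{k-3}$. Pulling out one more factor $1/\Delta$ from $x_0\rho+1=\bigl((1+r)x_0+\Delta\bigr)/\Delta$ gives total powers $(1+r)^{k-1}$ and $\Delta^{k-1}$, as claimed.

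We also need $a_k=[u^k]G(u,0)$ to be the pure coefficients of the system. This holds because the restriction $v=0$ extracts exactly the pure $u$-monomials of $G$, as noted after Proposition~\ref{prop:restriction}.
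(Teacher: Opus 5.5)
Your proposal is correct and follows the paper's proof essentially step for step. Both arguments expand $1/(\Delta-(1+r)u)$ as a geometric series in the exact identity of Proposition~\ref{prop:restriction}, and both collect, for $k\ge3$, the two contributions coming from $x_0u^2$ and $u^3$. Both then recognize $(1+r)x_0+\Delta=\Theta$, and your exponent bookkeeping giving $(1+r)^{k-1}/\Delta^{k-1}$ checks out.
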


\begin{proof}
Using the geometric expansion
\[
\frac1{\Delta-(1+r)u}
=
\frac1{\Delta}
\sum_{j=0}^{\infty}
\left(
\frac{(1+r)u}{\Delta}
\right)^j,
\qquad
\left|
\frac{(1+r)u}{\Delta}
\right|<1,
\]
equation~\eqref{eq:restriction} becomes
\[
G(u,0)
=
\frac{r(1+r)^2x_0}{\Delta}
u^2(x_0+u)
\sum_{j=0}^{\infty}
\left(
\frac{(1+r)u}{\Delta}
\right)^j.
\]
The coefficient of \(u^2\) is immediately
\[a_2=\frac{r(1+r)^2x_0^2}{\Delta}.\]

For \(k\ge3\), the coefficient of \(u^k\) receives two contributions,
coming respectively from the factors \(x_0\) and \(u\) in
\((x_0+u)\). Their sum is
\[
\frac{r(1+r)^{k-1}x_0}
{\Delta^{k-1}}
\bigl((1+r)x_0+\Delta\bigr).
\]
Since
\[
(1+r)x_0+\Delta
=
cr+2(1+r)x_0-1
=
\Theta,
\]
formula~\eqref{eq:ak-formula} follows.
\end{proof}

\begin{corollary}\label{cor:beta-zeroo}
If $\beta=0$, then
\[
[u^k]G(u,0)=0,
\qquad
k\ge3.
\]
Consequently,
\[
G(u,0)
=
\frac{r(1+r)^2x_0^2}{\Delta}u^2,
\]
that is, the restriction of the system \eqref{eq:nilpotent-system} to the
eigendirection generated by \(q_0\) becomes exactly quadratic.
\end{corollary}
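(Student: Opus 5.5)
The plan is to combine the identity \eqref{eq:Theta-beta} with the closed formula of Corollary~\ref{cor:taylor-coefficients}, after first checking that its hypothesis \(\Delta\neq0\) holds in the case \(\beta=0\). For that check, we use \(x_0+\beta=-\Delta/(1+r)\) from \eqref{eq:BT-identities}. With \(\beta=0\) this gives \(\Delta=-(1+r)x_0\), which is nonzero because \(x_0>0\) and \(r>0\) at a positive equilibrium. Hence the geometric expansion behind Corollary~\ref{cor:taylor-coefficients} is valid in the disc \(|u|<x_0\).

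Next, \eqref{eq:Theta-beta} gives \(\Theta=-(1+r)\beta=0\) when \(\beta=0\). Substituting this into \eqref{eq:ak-formula} yields \(a_k=[u^k]G(u,0)=0\) for every \(k\ge3\). The expansion therefore reduces to \(G(u,0)=a_2u^2\) with \(a_2=r(1+r)^2x_0^2/\Delta\), on the disc of convergence.

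To conclude that the restriction is exactly quadratic, and not merely quadratic as a power series near \(0\), I would check the identity directly from \eqref{eq:restriction}. Since \(\Delta=-(1+r)x_0\), we have \(\Delta-(1+r)u=-(1+r)(x_0+u)\). The factor \(x_0+u\) then cancels in \eqref{eq:restriction}, leaving
\[
G(u,0)=-r(1+r)x_0u^2=\frac{r(1+r)^2x_0^2}{\Delta}\,u^2
\]
identically, wherever the original system is defined (\(x=x_0+u\neq0\)). This direct cancellation is the cleanest form of the argument.

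I see no genuine obstacle. The only points needing care are confirming \(\Delta\neq0\) so that Corollary~\ref{cor:taylor-coefficients} applies, and noting that the cancellation in \eqref{eq:restriction} upgrades the vanishing of the Taylor coefficients to an exact global identity.
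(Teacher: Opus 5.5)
Your proposal is correct and follows essentially the same route as the paper. You use \(\Theta=-(1+r)\beta\) together with Corollary~\ref{cor:taylor-coefficients} to make every coefficient with \(k\ge3\) vanish, and then check directly from \eqref{eq:restriction} that \(\Delta=-(1+r)x_0\) makes the factor \(x_0+u\) cancel against the denominator, giving \(G(u,0)=-r(1+r)x_0u^2\) identically. Your explicit check that \(\Delta\neq0\) when \(\beta=0\), so that the corollary's hypothesis really holds, is a small point the paper's proof leaves implicit.
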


\begin{proof}
By \eqref{eq:BT-identities}, $\Theta=-(1+r)\beta$, so the
condition $\beta=0$ is equivalent to $\Theta=0$. Corollary~\ref{cor:taylor-coefficients}
then implies that every pure coefficient of degree at least three
vanishes simultaneously, leaving $G(u,0)$ exactly quadratic.

This phenomenon can also be seen directly from the exact rational
identity \eqref{eq:restriction}, without invoking its Taylor
expansion. Indeed, when $\beta=0$, the parametrization
\eqref{eq:BT-parametrization} gives
$cr=1-2(r+1)x_0,$
and substituting this relation into
$\Delta=cr+(r+1)x_0-1$
yields $\Delta=-(1+r)x_0.$
Consequently,
\[
\frac{x_0+u}{\Delta-(1+r)u}
=\frac{x_0+u}{-(1+r)x_0-(1+r)u}
=-\frac{1}{1+r},
\]
a genuine constant, since the factor $x_0+u$ cancels exactly
against the denominator. Substituting back into
\eqref{eq:restriction} gives
\[
G(u,0)=r(1+r)^2x_0\,u^2\cdot\left(-\frac{1}{1+r}\right)
=-r(1+r)x_0\,u^2,
\]
which confirms, at the level of the exact rational function itself
rather than order by order, that $G(u,0)$ collapses identically to
a quadratic when $\beta=0$.
\end{proof}

Proposition~\ref{prop:restriction} and its corollaries show that all pure
nonlinear coefficients of degree at least three are proportional to
the same structural factor~\(\Theta\).
The Bogdanov--Takens parametrization
\eqref{eq:BT-parametrization} coincides with that obtained by Zhao and Zhao
\cite{ZhaoZhao2026}. In particular,
\[
\beta=\frac{1-cr-2(r+1)x_0}{r+1},
\]
so that the distinguished case $\beta=0$ is equivalent to
\[
x_0=\frac{1-cr}{2(r+1)}.
\]
By Corollary~\ref{cor:beta-zeroo}, this parameter value is
characterized by the simultaneous vanishing of all pure nonlinear
coefficients of degree at least three.
To illustrate the previous results, we consider the parameter values
identified by Zhao and Zhao \cite{ZhaoZhao2026} in their analysis of a
codimension-four Bogdanov--Takens degeneracy:
\[
r=0.6,\qquad
c=0.2,\qquad
\eta=2,\qquad
x_0=0.275,
\]
for which the Bogdanov--Takens parametrization
\eqref{eq:BT-parametrization} gives
\[
\alpha=0.121,\qquad
\beta=0,\qquad
\delta=0.32175,\qquad
\Delta=-0.44\neq0.
\]
Hence Corollary~\ref{cor:taylor-coefficients} applies and the exact
restriction~\eqref{eq:restriction} reduces to
\[
G(u,0)=\frac12\,r(cr-1)u^2=-0.264\,u^2,
\]
confirming that every pure nonlinear coefficient of degree at least
three vanishes identically.

Finally, all the identities established in this section, including the
double-zero property of Lemma~\ref{lem:doublezero}, the exact
factorization~\eqref{eq:F-factorization}, the restriction
\eqref{eq:restriction}, and the coefficient
formula~\eqref{eq:ak-formula}, were independently verified by exact
symbolic computations in \textit{Mathematica}. In particular,
\eqref{eq:ak-formula} was checked term by term against the Taylor
expansion of \eqref{eq:restriction} through order twelve.
Although the structural identity completely determines the pure Taylor
coefficients of the linearly reduced system~\eqref{eq:nilpotent-system}, the coefficients of
the smooth Bogdanov--Takens normal form are affected by the successive
near-identity transformations. The next section explains this
distinction.

\section{Relation with the degenerate Bogdanov--Takens normal form}
\label{sec:normal form}
The structural identity established in Section~\ref{sec:identity} should be interpreted
as a structural
property of the nilpotent reduction rather than as a complete
derivation of the degenerate Bogdanov--Takens normal form.

The analysis in~\cite{ZhaoZhao2026} determines the
codimension-two, -three, and -four degeneracies through successive
near-identity transformations leading to the Takens normal form and the
corresponding normal form coefficients. In contrast, the present
approach stops before these transformations are performed and focuses
instead on the structure of the nilpotent system itself.
%
%
Proposition~\ref{prop:restriction} shows that the restriction of the
system~\eqref{eq:nilpotent-system} to the eigendirection generated by $q_0$
is an exact rational function. Consequently, every pure nonlinear
coefficient of order at least three is proportional to the same
structural factor $\Theta$. Since $\Theta=-(1+r)\beta$, by
\eqref{eq:Theta-beta}, all these coefficients vanish simultaneously
when $\beta=0$. This cancellation should not be identified with the
complete codimension-four Bogdanov--Takens conditions obtained by Zhao and Zhao
\cite{ZhaoZhao2026}, which also involve mixed normal form
coefficients generated during the Takens normalization. Rather, the
identity established in Proposition~\ref{prop:restriction} is an
exact all orders property of the linearly reduced system. While the
Takens normalization determines the complete local dynamics, the
exact identity established in Section~\ref{sec:identity} shows that all pure nonlinear
coefficients of degree at least three arise from a single exact
rational expression associated with the linearly reduced system.
This reveals their common structural origin and provides a closed-form
expression for them at every order.

\section{Consequences for the Kuznetsov normal form}
\label{sec:kuznetsov}
The exact structural identity established in Section~\ref{sec:identity} completely
determines the pure Taylor coefficients of 
system~\eqref{eq:nilpotent-system}. A natural question is whether the
corresponding coefficients of the smooth Bogdanov--Takens normal form
also vanish. The answer is negative. The recursive normal form
procedure of Kuznetsov~\cite{Kuznetsov2005} combines both pure and
mixed Taylor coefficients through successive near-identity
transformations. Consequently, the normal form coefficients cannot, in
general, be inferred solely from the pure Taylor coefficients. The
purpose of this section is to make this relation explicit.

\subsection{Taylor coefficients and normal form coefficients}
Following the recursive procedure in~\cite{Kuznetsov2005}, the normal form coefficients are
obtained recursively from the Taylor coefficients of
system~\eqref{eq:nilpotent-system} by solving the associated
homological equations. Since the intermediate recursive calculations
and auxiliary coefficients play no role in the present analysis, we
report directly the resulting closed expressions for the normal form
coefficients required below.

We write the Taylor expansion of system~\eqref{eq:nilpotent-system} in the form
\begin{equation}\label{eq:Taylor-Kuznetsov}
\begin{aligned}
\dot u &= v+ \sum_{j+k\geq2} \frac{a_{jk}}{j!\,k!}\,u^jv^k,\qquad 
\dot v &=
\sum_{j+k\geq2}
\frac{b_{jk}}{j!\,k!}\,u^jv^k.
\end{aligned}
\end{equation}
Thus, \(a_{jk}\) and \(b_{jk}\) are the Taylor coefficients of the
linearly reduced system. They should not be confused with the
coefficients of its smooth normal form.

To distinguish the two families, we denote the normal form
coefficients by \(A_k\) and \(B_k\). Following
\cite{Kuznetsov2005}, system~\eqref{eq:Taylor-Kuznetsov} can be
transformed by smooth near-identity coordinate changes into
\begin{equation}\label{eq:Kuznetsov-normal form}
\begin{aligned}
\dot w_0&=w_1,\\
\dot w_1
&=
A_2w_0^2+B_2w_0w_1
+A_3w_0^3+B_3w_0^2w_1 +A_4w_0^4+B_4w_0^3w_1
+\mathcal{O}\!\left(\|(w_0,w_1)\|^5\right).
\end{aligned}
\end{equation}
Unlike the coefficients \(a_{jk}\) and \(b_{jk}\), the quantities
\(A_k\) and \(B_k\) incorporate contributions produced by both the
pure and mixed terms during the normal form transformation.

\begin{proposition}\label{prop:Kuznetsov-pure}
For the reduced predator--prey system,
\[
a_{k0}=0,
\qquad k\geq2.
\]
Moreover,
\[
b_{20}=
\frac{2r(1+r)^2x_0^2}{\Delta},
\]
and, for every \(k\geq3\),
\[
b_{k0}
=
k!\,
\frac{r(1+r)^{k-1}x_0\Theta}{\Delta^{k-1}}.
\]
Consequently,
\[
b_{k0}=0,
\qquad k\geq3,
\]
whenever \(\Theta=0\).
\end{proposition}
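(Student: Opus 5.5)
The plan is to read off the pure coefficients of both components directly from the restriction of system~\eqref{eq:nilpotent-system} to the line \(v=0\). By the normalization in~\eqref{eq:Taylor-Kuznetsov}, the terms with \(k=0\) are
\[
\sum_{j\ge2}\frac{a_{j0}}{j!}u^j = F(u,0),
\qquad
\sum_{j\ge2}\frac{b_{j0}}{j!}u^j = G(u,0).
\]
Hence \(a_{j0}=j!\,[u^j]F(u,0)\) and \(b_{j0}=j!\,[u^j]G(u,0)\). It therefore suffices to identify \(F(u,0)\) exactly and to invoke Corollary~\ref{cor:taylor-coefficients} for \(G(u,0)\).

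For the first component, I will use \(u=Y\), so that \(\dot u=\dot Y=\dot y\), the second equation of~\eqref{eq:model}. Splitting off the linear part, \(\dot u=v+F(u,v)\), we get \(F(u,v)=\dot y-v\). On \(v=0\) the coordinates~\eqref{eq:original-variables} give \(y=x+c\). By the computation in the proof of Proposition~\ref{prop:restriction}, the factor \(1-y/(x+c)\) then vanishes identically, so \(\dot y\equiv0\) along this line. Thus \(F(u,0)=0-0=0\) exactly, and every \(a_{k0}\) vanishes, not merely finitely many. I would double-check the linear part here: the first row of \(P^{-1}J(E)P\) is \((0,1)\), so the linear term of \(\dot u\) is exactly \(v\), which also vanishes on \(v=0\). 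This is consistent.

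For the second component, Corollary~\ref{cor:taylor-coefficients} (under its standing hypothesis \(\Delta\neq0\), which I will state) gives
\[
[u^2]G(u,0)=\frac{r(1+r)^2x_0^2}{\Delta},
\qquad
[u^k]G(u,0)=\frac{r(1+r)^{k-1}x_0\Theta}{\Delta^{k-1}}\quad (k\ge3).
\]
Multiplying by \(2!\) and \(k!\) respectively yields the stated \(b_{20}\) and \(b_{k0}\). The final claim then follows because every \(b_{k0}\) with \(k\ge3\) carries the factor \(\Theta\); equivalently, by~\eqref{eq:Theta-beta} this is the case \(\beta=0\) of Corollary~\ref{cor:beta-zeroo}.

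The only delicate point is bookkeeping rather than analysis. I must confirm that the factorial normalization in~\eqref{eq:Taylor-Kuznetsov}, with \(j!\,k!\) and \(k=0\), matches the plain power-series coefficients of Corollary~\ref{cor:taylor-coefficients}, which it does. I must also confirm that \(F\) as defined excludes the linear term \(v\). No convergence issue arises beyond the radius \(|u|<|\Delta|/(1+r)\) already used in Corollary~\ref{cor:taylor-coefficients}.
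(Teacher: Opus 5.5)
Your proposal is correct and follows essentially the same route as the paper: you use $u=Y$ and the vanishing of $\dot y$ on the line $v=0$ (where $y=x+c$) to get $F(u,0)\equiv 0$, and you obtain $b_{20}$ and $b_{k0}$ by multiplying the coefficients of Corollary~\ref{cor:taylor-coefficients} by the factorials from the normalization in~\eqref{eq:Taylor-Kuznetsov}. Your extra checks are sound and spell out what the paper leaves implicit: that the linear term of $\dot u$ is exactly $v$, and that the hypothesis $\Delta\neq0$ is needed.
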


\begin{proof}
From the inverse change of coordinates,
\[
x=x_0+u+\frac{v}{rx_0},
\qquad
y=x_0+c+u.
\]
Hence, on the line \(v=0\),
\[
x=x_0+u,
\qquad
y=x_0+c+u=x+c.
\]
Hence the second equation of the original system vanishes
identically. Since \(u=Y\), it follows that \(\dot u=0\) on \(v=0\).
From $\dot u=v+F(u,v)$ we obtain $F(u,0)\equiv0.$ Comparison with~\eqref{eq:Taylor-Kuznetsov} therefore gives
\(a_{k0}=0\) for all \(k\geq2\).

The formulas for \(b_{20}\) and \(b_{k0}\), \(k\geq3\), follow from
Corollary~\ref{cor:taylor-coefficients}, taking into account the
factorial normalization used in~\eqref{eq:Taylor-Kuznetsov}.
\end{proof}

\subsection{The first normal form coefficients}
For a planar system with canonical nilpotent linear part,
the formulas in \cite[Appendix B]{Kuznetsov2005} give
\begin{equation}\label{eq:Kuznetsov-first}
\begin{aligned}
A_2&=\frac12 b_{20},\qquad
B_2=a_{20}+b_{11},\\
A_3&=
\frac16b_{30}
+\frac12b_{20}a_{11}
-\frac12b_{11}a_{20}.
\end{aligned}
\end{equation}
For the present model,
\[
a_{20}=0,
\qquad
a_{11}
=
-\frac{\eta}{1+\eta(c+x_0)},
\]
and
\[
b_{11}
=
2+4r
-\frac{r(1+c\eta)}{1+\eta(c+x_0)}
-\frac{2(1+r)(cr-1)}{\Delta}.
\]
Consequently,
\begin{equation}\label{eq:Kuznetsov-explicit}
\begin{aligned}
A_2
&=
\frac{r(1+r)^2x_0^2}{\Delta},\\
B_2
&=
2+4r
-\frac{r(1+c\eta)}{1+\eta(c+x_0)}
-\frac{2(1+r)(cr-1)}{\Delta},\\
A_3
&=
\frac{r(1+r)^2x_0}{\Delta^2}
\left(
\Theta-
\frac{\eta x_0\Delta}
     {1+\eta(c+x_0)}
\right).
\end{aligned}
\end{equation}

\begin{corollary}\label{cor:Kuznetsov-beta-zero}
If \(\Theta=0\), then
\[
A_3=\frac12b_{20}a_{11}.
\]
Thus, although the pure cubic Taylor coefficient \(b_{30}\) vanishes,
the cubic normal form coefficient is generally nonzero and is
generated by a mixed quadratic interaction.
\end{corollary}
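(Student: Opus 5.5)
The plan is to start from the Kuznetsov formula for \(A_3\) in \eqref{eq:Kuznetsov-first} and show that the pure cubic term and the \(a_{20}\) term both drop out when \(\Theta=0\).

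\emph{Step 1.} Proposition~\ref{prop:Kuznetsov-pure} gives \(a_{k0}=0\) for all \(k\ge2\). In particular \(a_{20}=0\), so the term \(-\tfrac12 b_{11}a_{20}\) in \eqref{eq:Kuznetsov-first} vanishes identically on the Bogdanov--Takens manifold, independently of \(\Theta\).

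\emph{Step 2.} The same proposition gives
\[
b_{30}=3!\,\frac{r(1+r)^2x_0\Theta}{\Delta^2}.
\]
This vanishes when \(\Theta=0\), where we use the standing assumption \(\Delta\neq0\) under which Corollary~\ref{cor:taylor-coefficients} and Proposition~\ref{prop:Kuznetsov-pure} were derived. Substituting both facts into \eqref{eq:Kuznetsov-first} leaves
\[
A_3=\tfrac12 b_{20}a_{11}.
\]

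\emph{Step 3.} Next we justify the claim that \(A_3\) is generally nonzero. With
\[
b_{20}=\frac{2r(1+r)^2x_0^2}{\Delta}\neq0
\]
(since \(r,x_0>0\)) and
\[
a_{11}=-\frac{\eta}{1+\eta(c+x_0)},
\]
we get
\[
A_3=-\frac{r(1+r)^2\eta x_0^2}{\Delta\,(1+\eta(c+x_0))}.
\]
This agrees with \eqref{eq:Kuznetsov-explicit} at \(\Theta=0\). It is nonzero whenever \(\eta>0\) and \(1+\eta(c+x_0)\neq0\); the latter holds automatically for positive parameters. As a sanity check, at the Zhao--Zhao point (\(r=0.6\), \(c=0.2\), \(\eta=2\), \(x_0=0.275\), \(\Delta=-0.44\)) this expression is a nonzero number. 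Because the surviving term contains \(a_{11}\), it comes from a mixed quadratic Taylor coefficient, which gives the interpretive statement.

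\emph{Main obstacle.} The only step that is not pure substitution is confirming the value of \(a_{11}\), which is asserted just before the statement. If it needs checking, I would compute \(\partial_u\partial_v F(0,0)\) from \(\dot u=\dot Y=g(x,y)\), using \(x=x_0+u+v/(rx_0)\) and \(y=x_0+c+u\). Write
\[
g=\frac{\delta y}{1+\eta y}\cdot\frac{x+c-y}{x+c}.
\]
The second factor equals \(v/(rx_0(x+c))\), which is linear in \(v\) to leading order. Hence \(a_{11}\) is obtained from the \(u\)-derivative of
\[
\frac{\delta y}{(1+\eta y)\,rx_0\,(x+c)}
\]
at the equilibrium. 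The two contributions from \(x+c\) and from \(y\) in this expression cancel, leaving \(-\eta/(1+\eta(c+x_0))\) after using \(\delta=rx_0(1+\eta(x_0+c))\).
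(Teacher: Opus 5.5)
Your proposal is correct and follows the paper's own route: you substitute $a_{20}=0$ and $b_{30}=0$ (from Proposition~\ref{prop:Kuznetsov-pure} with $\Theta=0$) into the Kuznetsov formula \eqref{eq:Kuznetsov-first}, which leaves $A_3=\tfrac12 b_{20}a_{11}$. Your Step~3 and your derivation of $a_{11}=-\eta/(1+\eta(c+x_0))$ go beyond the paper's two-line proof, but they are accurate and agree with \eqref{eq:Kuznetsov-explicit} and \eqref{eq:Kuznetsov-beta-zero}; also, $\Delta\neq0$ is automatic rather than an extra assumption, since $\Theta=0$ forces $\Delta=-(1+r)x_0$.
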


\begin{proof}
When \(\Theta=0\), Proposition~\ref{prop:Kuznetsov-pure} gives
\(b_{30}=0\). The result follows immediately from
\eqref{eq:Kuznetsov-first}, since \(a_{20}=0\).
\end{proof}

\subsection{The case \(\beta=0\)}
Since \(\Theta=-(1+r)\beta\), the case \(\beta=0\) is equivalent to
\(\Theta=0\). The Bogdanov--Takens parametrization then gives
\[
x_0=\frac{1-cr}{2(1+r)},
\qquad
\Delta=-(1+r)x_0.
\]
Substitution into~\eqref{eq:Kuznetsov-explicit} yields
\begin{equation}\label{eq:Kuznetsov-beta-zero}
\begin{aligned}
A_2
&=
-r(1+r)x_0
=
-\frac{r(1-cr)}{2},\\
B_2
&=
-2-
\frac{r(1+c\eta)}
     {1+\eta(c+x_0)},\\
A_3
&=
\frac{r(1+r)x_0\eta}
     {1+\eta(c+x_0)}.
\end{aligned}
\end{equation}

\begin{corollary}\label{cor:beta-zero}
Assume $r>0$, $c>0$, $\eta>0$, and $cr<1$. Then, for $\beta=0$,
\[
A_2<0,\qquad B_2<0,\qquad A_3>0.
\]
In particular, $A_2B_2\neq0$, and hence the corresponding
double-zero equilibrium is a nondegenerate Bogdanov--Takens
singularity of codimension two.
\end{corollary}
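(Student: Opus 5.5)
The plan is to read off the three signs directly from the closed forms \eqref{eq:Kuznetsov-beta-zero}. The only additional input is that the relevant quantities are positive. When $\beta=0$, the Bogdanov--Takens parametrization gives $x_0=(1-cr)/(2(1+r))$, and the hypothesis $cr<1$ makes this strictly positive. This is the one place where $cr<1$ enters: without it the equilibrium $E=(x_0,x_0+c)$ would not be a positive equilibrium. Since $r,c,\eta,x_0>0$, we also get $1+\eta(c+x_0)>1>0$ and $1+c\eta>0$.

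With these facts the three signs follow one at a time.

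\begin{itemize}
\item For $A_2$, write $A_2=-r(1-cr)/2$. Here $r>0$ and $1-cr>0$, so $A_2<0$.
\item For $B_2$, use $B_2=-2-r(1+c\eta)/(1+\eta(c+x_0))$. The fraction is positive, so $B_2\le -2<0$.
\item For $A_3$, use $A_3=r(1+r)x_0\eta/(1+\eta(c+x_0))$. This is a product and quotient of strictly positive quantities, so $A_3>0$.
\end{itemize}

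Before relying on these expressions, I would check \eqref{eq:Kuznetsov-beta-zero} against \eqref{eq:Kuznetsov-explicit} using $\Delta=-(1+r)x_0$ and $\Theta=0$.

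\begin{itemize}
\item For $A_2$: $r(1+r)^2x_0^2/(-(1+r)x_0)=-r(1+r)x_0$.
\item For $B_2$: $cr-1=-2(1+r)x_0$, so $-2(1+r)(cr-1)/\Delta=-4(1+r)$. Then $2+4r-4-4r=-2$.
\item For $A_3$: it reduces to $\frac{r(1+r)^2x_0}{(1+r)^2x_0^2}\cdot\frac{\eta x_0(1+r)x_0}{1+\eta(c+x_0)}$, which matches the stated form.
\end{itemize}

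The conclusion is then immediate. $A_2B_2>0$ gives in particular $A_2B_2\neq0$, which is exactly the Bogdanov--Takens nondegeneracy condition of Kuznetsov's normal form \eqref{eq:Kuznetsov-normal form}. The double-zero equilibrium is therefore a codimension-two Bogdanov--Takens point. I expect no real obstacle here; the only point needing care is justifying $x_0>0$, which comes from $cr<1$.
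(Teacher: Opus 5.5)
Your proposal is correct and is essentially the paper's proof: you deduce $x_0=(1-cr)/(2(1+r))>0$ from $cr<1$ and then read the signs of $A_2$, $B_2$, $A_3$ directly off the closed forms \eqref{eq:Kuznetsov-beta-zero}, which gives $A_2B_2\neq0$. Your extra check that \eqref{eq:Kuznetsov-beta-zero} follows from \eqref{eq:Kuznetsov-explicit} with $\Delta=-(1+r)x_0$ and $\Theta=0$ is accurate, and it spells out a substitution the paper leaves implicit.
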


\begin{proof}
Since $cr<1$, the relation
\[
x_0=\frac{1-cr}{2(1+r)}
\]
implies $x_0>0$. The inequalities then follow directly from \eqref{eq:Kuznetsov-beta-zero} and the positivity of
$r$, $c$, and $\eta$. Hence $A_2B_2\neq0$, which is the standard
nondegeneracy condition for a codimension-two Bogdanov--Takens
singularity.
\end{proof}
Thus, within the smooth normal form framework considered here, the
boundary condition $\beta=0$ does not belong to the degenerate
Bogdanov--Takens locus $B_2=0$ in the biologically relevant
parameter regime.


%

\subsection{The degenerate Bogdanov--Takens locus and the fourth-order coefficient}
\label{ssec:cusp}
The codimension-three degenerate Bogdanov--Takens case has already
been analyzed within the recursive normal form framework of Zhao and Zhao
\cite{ZhaoZhao2026}. We now reconsider this degeneracy from the
viewpoint of the smooth normal form of Kuznetsov \cite{Kuznetsov2005}.
Under the nondegeneracy assumption $A_2\neq0$, the additional
condition $B_2=0$ defines the degenerate Bogdanov--Takens locus.
For the present model, after clearing the denominator
$1+\eta(c+x_0)$, the equation $B_2=0$ is linear in $\eta$.
Consequently, once the Bogdanov--Takens parametrization has been
imposed, this equation determines $\eta$ explicitly as a rational
function of $r$, $c$, and $x_0$. This explicit parametrization
allows us to continue the smooth normal form calculation to fourth
order and to identify the coefficient $B_4$ that remains after the
admissible hypernormalizations.

\begin{lemma}\label{lem:eta-cusp}
On the Bogdanov--Takens manifold, assume that
$$-cr+c^2r^2+2cx_0-2rx_0
+5crx_0+5cr^2x_0
+2x_0^2+6rx_0^2+4r^2x_0^2
\neq 0.$$
Then the condition $B_2=0$ is equivalent to
\begin{equation}\label{eq:eta-cusp-explicit}
\eta=\eta_{\mathrm c}(r,c,x_0) = 
-\frac{-r+cr^2+2x_0+5rx_0+3r^2x_0}{-cr+c^2r^2+2cx_0-2rx_0+5crx_0+5cr^2x_0+2x_0^2+6rx_0^2+4r^2x_0^2}.
\end{equation}
\end{lemma}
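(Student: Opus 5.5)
The plan is to start from the explicit expression for $B_2$ in \eqref{eq:Kuznetsov-explicit}, clear its two denominators, and observe that what remains is an affine equation in $\eta$. We work on the Bogdanov--Takens manifold with $\Delta\neq0$, which is needed anyway for $A_2$ and $B_2$ to be defined. We also note that $D:=1+\eta(c+x_0)>0$ for positive parameters. Hence $B_2=0$ is equivalent to
\[
\Delta D\,B_2=(2+4r)\Delta D-r\Delta(1+c\eta)-2(1+r)(cr-1)D=0 .
\]
Introduce the $\eta$-independent quantity
\[
K:=(2+4r)\Delta-2(1+r)(cr-1).
\]
The equation then reads $K D-r\Delta-rc\Delta\,\eta=0$. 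Substituting $D=1+\eta(c+x_0)$ puts it in the form
\[
\eta\bigl(K(c+x_0)-rc\Delta\bigr)=r\Delta-K ,
\]
which is the claimed linearity in $\eta$.

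The next step is to simplify $K$ using $\Delta=cr+(r+1)x_0-1$ from \eqref{eq:DeltaTheta}. Expanding, the terms $\pm2cr$ cancel and the constants combine, which should give
\[
K=2\bigl(cr^2-r+(1+3r+2r^2)x_0\bigr).
\]
From this I would compute both sides of the affine equation.
\begin{itemize}
\item \emph{Right-hand side.} Expanding gives $r\Delta-K=-\bigl(-r+cr^2+2x_0+5rx_0+3r^2x_0\bigr)$, which is minus the numerator in \eqref{eq:eta-cusp-explicit}.
\item \emph{Coefficient of $\eta$.} Expanding $K(c+x_0)-rc\Delta$ should reproduce exactly the polynomial in the hypothesis. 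Spot checks agree: the $c^2r^2$ terms give $2c^2r^2-c^2r^2$, and the $cr$ terms give $-2cr+cr$.
\end{itemize}

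Given the hypothesis that this coefficient is nonzero, we divide by it and obtain $\eta=\eta_{\mathrm c}$. Conversely, if $\eta=\eta_{\mathrm c}$, the affine equation holds. Multiplying back by the nonzero factor $(\Delta D)^{-1}$ then gives $B_2=0$, so the two conditions are equivalent.

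The only real obstacle is bookkeeping: verifying term by term that $K(c+x_0)-rc\Delta$ matches all nine monomials of the stated denominator, and keeping track of the overall sign. I would carry out this expansion carefully, or confirm it symbolically as the paper does elsewhere. I would also remark that the equivalence uses $\Delta\neq0$ and $D\neq0$, both of which are implicit in the definition of $B_2$.
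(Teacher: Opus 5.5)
Your proposal is correct and follows the same route as the paper: clear the denominators in $B_2=b_{11}$, observe that the equation becomes affine in $\eta$, and solve for $\eta$ using the nonvanishing hypothesis. Expanding confirms that $K(c+x_0)-rc\Delta$ is exactly the hypothesis polynomial and that $r\Delta-K$ is minus the stated numerator, and your explicit form of $K$ and your check that $\Delta D\neq0$ supply details the paper's terse proof leaves implicit.
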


\begin{proof}
From Proposition~\ref{prop:Kuznetsov-pure}, $B_2=b_{11}$.
Using the explicit expression for $b_{11}$ and clearing the
denominator $1+\eta(c+x_0)$, the equation $B_2=0$ becomes linear
in $\eta$. Under the nonvanishing assumption of the lemma, solving
for $\eta$ yields \eqref{eq:eta-cusp-explicit}. Since the biologically
relevant parameter regime requires $\eta>0$, only values satisfying
$\eta_{\mathrm c}(r,c,x_0)>0$ are admissible.
\end{proof}

Once the condition $B_2=0$ has been imposed, the remaining freedom
in the smooth near-identity transformation may be used successively
to normalize $B_3=0$ and $A_4=0$. After these normalizations, the
fourth-order mixed coefficient $B_4$ provides the next coefficient
relevant to the degenerate Bogdanov--Takens normal form.

\begin{proposition}\label{prop:B4}
After imposing $B_2=0$, $B_3=0$, and $A_4=0$, the fourth-order
mixed coefficient of the Kuznetsov normal form is
\begin{equation}\label{eq:B4-cusp}
B_4=
\frac{P_4(r,c,x_0)}
{r^2x_0^2(c+x_0)\Delta^3},
\end{equation}
where
\[P_4(r,c,x_0)=P^{(0)}(r,c) +P^{(1)}(r,c)x_0 +P^{(2)}(r,c)x_0^2 +P^{(3)}(r,c)x_0^3 +P^{(4)}(r)x_0^4,\]
and the coefficient polynomials $P^{(0)},\ldots,P^{(4)}$
are listed explicitly in Appendix~\ref{app:P4}.
\end{proposition}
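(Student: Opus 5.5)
The plan is to compute $B_4$ from Kuznetsov's closed formulas for the planar nilpotent normal form, feeding in the Taylor coefficients of \eqref{eq:nilpotent-system} up to order four and the structural information already obtained.

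\textbf{Step 1: Taylor coefficients.} Using \eqref{eq:original-variables}, I will write $F$ and $G$ explicitly. Since $u=Y$, we have $\dot u=g(x,y)$, where $g$ is the second component of \eqref{eq:model}. Also $\dot v=rx_0(f-g)$. Because $y-(x+c)=-v/(rx_0)$, the function $g$ factors as $\frac{\delta y}{(1+\eta y)(x+c)}\cdot\frac{v}{rx_0}$. Hence every $a_{jk}$ with $k\ge1$ comes from a single Taylor expansion of $\frac{\delta y}{(1+\eta y)(x+c)}$, and $a_{j0}=0$ as in Proposition~\ref{prop:Kuznetsov-pure}.

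For $G$, I will write $f(x,y)=H(x)-rx\,(y-x-c)$. The pure part $b_{k0}$ is then given by Corollary~\ref{cor:taylor-coefficients}. The mixed part uses the exact rational forms $H(x_0+u+v/(rx_0))$ from Proposition~\ref{prop:F-factorization}, with $u$ replaced by $u+v/(rx_0)$. This gives all $b_{jk}$ with $j+k\le4$ as rational functions in $r,c,\eta,x_0$ with denominators $\Delta^m$ and powers of $1+\eta(c+x_0)$. Here $\delta$ is eliminated by \eqref{eq:BT-parametrization}.

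\textbf{Step 2: Normal form recursion.} I will apply the recursive homological scheme of \cite[Appendix B]{Kuznetsov2005} through order four. This produces $A_2,B_2,A_3$ as in \eqref{eq:Kuznetsov-explicit}, then $B_3$, $A_4$ and $B_4$. When $B_2=0$, the hypernormalization freedom is used to remove $B_3$ and $A_4$; the standard way is a time reparametrization combined with a near-identity shift in $w_0$. The resulting $B_4$ is the standard invariant combination of the form
\[
B_4=\tilde B_4-\frac{c_1 \tilde B_3 A_3}{A_2}+\frac{c_2 A_3^2 B_2'}{A_2^2}+\cdots
\]
evaluated on $B_2=0$. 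I would derive the precise combination by tracking the hypernormal form transformation, rather than rely on memory.

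\textbf{Step 3: Substitution and simplification.} I will substitute $\eta=\eta_{\mathrm c}(r,c,x_0)$ from Lemma~\ref{lem:eta-cusp}. The factors $1+\eta(c+x_0)$ then become rational in $(r,c,x_0)$; note that $c+x_0$ appears once $\eta$ is eliminated, which explains the factor $(c+x_0)$ in \eqref{eq:B4-cusp}. I will then clear denominators and collect in powers of $x_0$ to identify $P^{(0)},\dots,P^{(4)}$. The factor $r^2x_0^2$ should come from the $1/(rx_0)$ scaling in \eqref{eq:Jordan-change} and from $A_2\propto x_0^2$, while $\Delta^3$ comes from the Taylor coefficients.

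\textbf{Main obstacle.} The main difficulty is Step 2: correctly carrying out the hypernormalization at order four, which fixes the exact combination defining $B_4$, and then managing the rational simplification in Step 3. I would handle the latter by exact symbolic computation in \emph{Mathematica}. As a check, I would verify the final formula numerically at a sample point by direct normal form reduction.
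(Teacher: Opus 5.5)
Your proposal follows essentially the same route as the paper. Both compute the explicit Taylor coefficients of \eqref{eq:nilpotent-system}; your Step~1 factorization via $y-(x+c)=-v/(rx_0)$ and $f=H(x)+xv/x_0$ is correct. Both then apply Kuznetsov's fourth-order formulas, spend the residual smooth near-identity freedom on $B_3=A_4=0$, substitute $\eta=\eta_{\mathrm c}$ from Lemma~\ref{lem:eta-cusp}, and simplify by exact symbolic computation. No time reparametrization is needed for the hypernormalization: for example, $w_0\mapsto w_0+\mu w_0w_1$, with $w_1$ redefined as $\dot w_0$, removes $B_3$ for $\mu=-B_3/(3A_2)$ when $B_2=0$.

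Your side remarks about the origin of the denominator factors are inaccurate, though they play no role in the argument. On $B_2=0$ one has $1+\eta_{\mathrm c}(c+x_0)=rx_0\Delta/D$, with $D$ the denominator in \eqref{eq:eta-cusp-explicit}, so eliminating $\eta$ produces factors $rx_0\Delta$. The factor $c+x_0$ instead already enters through the $1/(x+c)$ in the predator equation, e.g.\ $a_{02}=-2/(rx_0(c+x_0))$.
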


\begin{proof}
The remaining freedom in the smooth near-identity transformation is
used to impose the normalization conditions $B_3=0$ and $A_4=0$.
Substituting the resulting expressions, together with
$\eta=\eta_{\mathrm c}(r,c,x_0)$ from
Lemma~\ref{lem:eta-cusp}, into the fourth-order normal form formulas
of Kuznetsov \cite{Kuznetsov2005} yields \eqref{eq:B4-cusp}.
The numerator can be written as a polynomial of degree four in
$x_0$, whose coefficient polynomials are collected in
Appendix~\ref{app:P4}.
\end{proof}

We remark that all identities involved in the derivation of
Proposition~\ref{prop:B4} were independently verified by exact
symbolic computation in \textit{Mathematica}. Consequently, on the
degenerate Bogdanov--Takens locus $B_2=0$, the next higher-order
degeneracy is determined by the single algebraic condition
$P_4(r,c,x_0)=0,$
provided that the denominator in \eqref{eq:B4-cusp} does not vanish.

\appendix

\section{Explicit expression of the polynomial \(P_4(r,c,x_0)\)}
\label{app:P4}
For completeness, we record the coefficient polynomials appearing in
Proposition~\ref{prop:B4}. They determine the numerator of the
fourth-order Kuznetsov coefficient \(B_4\) through the decomposition
\[
P_4(r,c,x_0)
=
P^{(0)}(r,c)
+
P^{(1)}(r,c)x_0
+
P^{(2)}(r,c)x_0^2
+
P^{(3)}(r,c)x_0^3
+
P^{(4)}(r)x_0^4.
\]

The coefficient polynomials are given by
\begin{align*}
P^{(0)}(r,c)
={}&
cr^3
-3c^2r^4
+3c^3r^5
-c^4r^6,
\\[2mm]
P^{(1)}(r,c)
={}&
-7cr^2
+2r^3
-16cr^3
+14c^2r^3
-15cr^4
+32c^2r^4
\\
&
-7c^3r^4
+24c^2r^5
-16c^3r^5
-11c^3r^6,
\\[2mm]
P^{(2)}(r,c)
={}&
16cr
-10r^2
+73cr^2
-16c^2r^2
-25r^3
+144cr^3
\\
&
-73c^2r^3
-15r^4
+143cr^4
-134c^2r^4
+56cr^5
\\
&
-118c^2r^5
-41c^2r^6,
\\[2mm]
P^{(3)}(r,c)
={}&
-12c
+22r
-84cr
+100r^2
-263cr^2
+172r^3
\\
&
-464cr^3+132r^4
-478cr^4
+38r^5
-268cr^5
-63cr^6,
\\[2mm]
P^{(4)}(r)
={}&
-\left(12+90r+271r^2
+425r^3
+369r^4
+169r^5 +32r^6
\right).
\end{align*}

\section*{Acknowledgements}
Martha Alvarez-Ram\'irez was supported by  CBI UAMI research project 2026.

\bibliographystyle{plain}
\bibliography{TBref}
\end{document}